\newtheorem{theorem}{Theorem}
\newtheorem{lemma}[theorem]{Lemma}
\newtheorem{corollary}[theorem]{Corollary}
\theoremstyle{remark}
\newtheorem{remark}[theorem]{Remark}
\newtheorem{question}[theorem]{Question}
\numberwithin{equation}{section}
\DeclareMathOperator{\Cr}{Cr}
\renewcommand{\epsilon}{\varepsilon}
\renewcommand{\phi}{\varphi}
\newcommand{\A}{\mathcal{A}}
\title{Crossing lemmas for $k$-systems of arcs}
\author{Alfredo Hubard}
\address{Universit\'e Gustave Eiffel, CNRS, LIGM, Marne-la-Vall\'ee, France}
\email{alfredo.hubard - at -univ-eiffel.fr}
\author{Hugo Parlier}
\address{University of Luxembourg, Department of Mathematics, Luxembourg}
\email{ hugo.parlier - at - uni.lu}
\thanks{The first named author was partially supported by the ANR grant ANR-19-CE40-0014 (project Min-Max).}
\thanks{The second named author was supported by the Luxembourg National Research Fund OPEN grant O21/16309996.}
\thanks{The first named author thanks Niloufar Fuladi and Arnaud de Mesmay for interesting discussions.}
\begin{document}

\begin{abstract} We show a generalization of the crossing lemma for multi-graphs drawn on orientable surfaces in which pairs of edges are assumed to be drawn by non-homotopic simple arcs which pairwise cross at most $k$ times. 
\end{abstract}

\maketitle

\section{Introduction}
 
The celebrated crossing lemma was discovered in the early 1980s by  Leighton, motivated by VLSI design and independently by Ajtai, Chv\'tal, Newborn, and Szemer\'edi who were answering questions raised by Turan, Erd\H{o}s and Guy. It provides a lower bound on the order of $m^3/n^2$ on the number of crossings of any planar drawing of a graph with $m$ edges and $n$ vertices. In the late 1990s, Szekely \cite{szekely} discovered a number of applications to incidence geometry problems for which a variant of the crossing lemma for multi-graphs is required. Szekely observed that the inequality for graphs implies an inequality for multi-graphs with a correcting factor of $\frac{1}{k}$, where $k$ is the largest number of edges between the same pair of vertices. In the last decade, crossing lemmas for multi-graphs, in which the correcting factor is not needed, have been given much attention. Starting with a question of Kaufmann and with the paper \cite{PaTo} of Pach and T\H{o}th, a series of papers have relaxed the necessary conditions for such a crossing lemma to hold. In this note, we observe that results from geometric topology, mainly due to Przytycki, can be used to obtain quantitatively improved, and more general, crossing lemmas for multi-graphs.

The link we use between graph drawings and topology is through systems of arcs. In this paper every arc is assumed to be simple. Given a surface with prescribed marked points (or punctures), we say that a family of arcs with endpoints on the marked points is a system of arcs, if no two arcs are homotopic to each other and homotopies are not allowed to pass through the marked points. In graph theoretical terms, the edges of a graph drawing are non-homotopic and simple if and only if they form an arc system on a sphere with marked points. If an arc system has the additional property that any two arcs pairwise intersect at most $k$ times, it will be called a $k$-system of arcs. The case of $k=1$ was studied in graph drawing under the name of single crossing non-homotopic multi-graph drawing (see \cite{Kaufmann}).

Our first result can be phrased in terms of the minimal number of intersection points of a $k$-system of arcs which, by analogy with the graph setting, we call its crossing number.

\begin{theorem}\label{crossing-karcs} If $\mathcal A$ is a $k$-system of $m$ arcs on an $n$-punctured sphere with $m>4n$, then \[\Cr(\mathcal A)\geq c_k\frac{m^{2+1/k}}{n^{1+1/k}},\] where $c_k \geq \frac{1}{10^6 k}$.
\end{theorem}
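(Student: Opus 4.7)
The plan is to adapt the classical probabilistic proof of the crossing lemma to the setting of $k$-systems of arcs, with Przytycki's polynomial bound on the size of a $(k-1)$-system on an $n$-punctured sphere playing the role of Euler's bound $m \leq 3n - 6$ for planar graphs.

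Step 1 (sparse case). I first establish that for any $k$-system $\mathcal A$ on the $n$-punctured sphere with $m$ arcs,
\[
\Cr(\mathcal A) \;\geq\; k\bigl(m - P_{k-1}(n)\bigr),
\]
where $P_{k-1}(n)$ denotes Przytycki's upper bound on the size of a $(k-1)$-system on the $n$-punctured sphere. The argument is a vertex-cover step: call a pair of arcs \emph{saturated} if it realizes the maximal intersection number $k$; each saturated pair contributes exactly $k$ to $\Cr(\mathcal A)$, so there are at most $\Cr(\mathcal A)/k$ saturated pairs. Choose one arc from each saturated pair (a vertex cover, of size at most $\Cr(\mathcal A)/k$) and remove them; what remains is a $(k-1)$-system, to which Przytycki's bound applies.

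Step 2 (random subsampling and optimization). I then sample each puncture of the sphere independently with probability $p \in (0,1]$ to be chosen, and restrict $\mathcal A$ to arcs both of whose endpoints survive. The resulting sub-system $\mathcal A'$ is still a $k$-system on $n' \sim \mathrm{Bin}(n,p)$ punctures, so Step 1 gives $\Cr(\mathcal A') \geq k(|\mathcal A'| - P_{k-1}(n'))$. Taking expectations, using the estimates $\mathbb E[|\mathcal A'|] = p^2 m$ and $\mathbb E[P_{k-1}(n')] \lesssim_k (pn)^k$ (moment bounds for the binomial), together with an upper bound $\mathbb E[\Cr(\mathcal A')] \leq p^4 \Cr(\mathcal A)$, produces an inequality of the shape
\[
p^4 \Cr(\mathcal A) \;\gtrsim_k\; p^2 m - (pn)^k.
\]
Choosing $p$ to balance the two right-hand-side terms then yields the desired bound $\Cr(\mathcal A) \geq c_k m^{2+1/k}/n^{1+1/k}$, the factor $1/k$ in the constant appearing through the Step~1 vertex-cover coefficient, and the $10^{-6}$ coming from careful bookkeeping through the optimization.

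The chief obstacle I expect is justifying the upper bound $\mathbb E[\Cr(\mathcal A')] \leq p^4 \Cr(\mathcal A)$: crossings between arcs that share one or two endpoints survive puncture sampling with probability $p^3$ or $p^2$ respectively, not $p^4$, and these ``short'' crossings break the clean classical behavior. I anticipate resolving this by bounding the number of such short crossings separately, via an a priori bound on the number of pairwise-$\leq k$-crossing non-homotopic arcs between any fixed pair of punctures (itself a Przytycki-type statement), and subtracting their contribution from $\Cr(\mathcal A)$ before running the main sampling argument.
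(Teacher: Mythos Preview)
Your plan works for $k=1$ (modulo the shared-endpoint patch you outline), but it fails for every $k\geq 2$, and the failure is in the optimisation of Step~2 rather than in the endpoint subtlety.

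Write $P_{k-1}(n)\leq C_k\, n^{k}$ for Przytycki's bound on $(k{-}1)$-systems. Under vertex sampling at rate $p$ one has $\mathbb E[|\mathcal A'|]=p^{2} m$ and, for $pn\geq k$, $\mathbb E\bigl[(n')^{k}\bigr]\asymp (pn)^{k}$, so the expected form of your Step~1 inequality is
\[
p^{4}\,\Cr(\mathcal A)\ \gtrsim_{k}\ p^{2}m \;-\; (pn)^{k}.
\]
When $k=1$ the two right-hand terms carry different powers of $p$ and balancing at $p\asymp n/m$ gives $\Cr\gtrsim m^{3}/n^{2}$, as claimed. When $k=2$ both terms carry the factor $p^{2}$; the inequality collapses to $p^{2}\Cr\gtrsim m-C_{2}n^{2}$ and no choice of $p$ helps, so you recover nothing beyond Step~1 itself, which is vacuous throughout the range $4n<m\leq C_{2}n^{2}$. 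When $k\geq 3$, balancing forces $p=(m/n^{k})^{1/(k-2)}$ and yields
\[
\Cr(\mathcal A)\ \gtrsim\ \frac{m}{p^{2}}\ =\ m^{(k-4)/(k-2)}\,n^{\,2k/(k-2)},
\]
which has the wrong exponents; e.g.\ at $k=3$ it gives $n^{6}/m$, weaker than the target $m^{7/3}/n^{4/3}$ once $m>n^{11/5}$, whereas $m$ ranges up to order $n^{4}$. Nor can you repair this by retreating to the lowest binomial moment (effectively using the linear bound $P_{0}(n)\lesssim n$): that route outputs the classical $m^{3}/n^{2}$, which for $k\geq 2$ and $m\gg n^{2}$ exceeds the trivial ceiling $k\binom{m}{2}$ --- a sign that the estimate $\mathbb E[\Cr(\mathcal A')]\leq p^{4}\Cr(\mathcal A)$ is genuinely false here, not merely in need of a small correction.

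The paper does not sample. It runs the Pach--Spencer--T\'oth bisection recursion: after splitting high-degree vertices it repeatedly applies the branching-bisection-width separator bound $bb(\mathcal A)\lesssim \sqrt{\Cr+\sum d(v)^{2}+n}$, freezing any piece $H$ that already satisfies $\Cr(H)\geq t\,e(H)$ for a threshold $t\asymp k^{-1}(m/n)^{1+1/k}$, or whose vertex count has dropped below $\asymp tn'/m$. The separator bound controls the total number of deleted edges; Przytycki's bound for \emph{$k$-systems} (exponent $k{+}1$, not your $(k{-}1)$-system exponent $k$) is invoked only at the end, to cap the number of edges in the small frozen pieces. The remaining $m/4$ edges lie in pieces with $\Cr\geq t\,e$, giving $\Cr(\mathcal A)\geq tm/4$. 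The moral: the separator recursion amplifies a degree-$(k{+}1)$ density bound into the stated crossing inequality, whereas random vertex sampling can amplify only density bounds that are $o(n^{2})$.
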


Upto the value of $c_k$ (which we made no effort to optimize) this theorem is best possible as explained below.  A slightly weaker version of this theorem was conjectured in \cite{Kaufmann}, and it was observed that it would follow from upper bounds on the size of arc systems.

For (orientable) surfaces of higher genus, we have the following more general result:

\begin{theorem}\label{genus-arcs} If $\mathcal A$ is a $k$-system of $m$ arcs on a surface of genus $g$ with $n$ punctures, denoted $S_{g,n}$, and $m > 16 n$ and $ n > 2^{17} g$, then:
\[\Cr(\mathcal A)\geq \frac{1}{10^8 k} \frac{m^{2+1/k}}{n^{1+1/k}}\text{      if      } m\leq \frac{5^{7k}k^k}{2^{5k}} \frac{n^{k+1}}{g^k}.\]
\[\Cr(\mathcal A)\geq  \frac{1}{2^{12}}\frac{m^2}{g} \text{      otherwise}.\]
\end{theorem}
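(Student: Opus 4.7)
The plan is to follow the blueprint of Theorem~\ref{crossing-karcs}, using Przytycki-style bounds for $k$-systems on $S_{g,n}$ combined with probabilistic deletion, and to split into two regimes based on which size bound is binding. The two key inputs are the extension of Przytycki's size bound to $S_{g,n}$, roughly $N_k(S_{g,n}) \leq A_k (n+g)^{k+1}$ (equivalently, a two-term form $A_k n^{k+1} + B_k g\cdot n^{k}$), and the Euler-type bound for crossing-free systems, $N_0(S_{g,n}) \leq 6g + 3n - 6$. The stated threshold $\frac{5^{7k}k^k}{2^{5k}}\frac{n^{k+1}}{g^k}$ is precisely where the two corresponding amplifications balance.

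\textbf{Regime 1.} When $m \leq \frac{5^{7k}k^k}{2^{5k}}\frac{n^{k+1}}{g^k}$, the hypothesis $n > 2^{17} g$ forces Przytycki's bound on $S_{g,n}$ to be comparable to its sphere analogue $A_k n^{k+1}$ up to a small absolute factor: the $g$-contribution is absorbed. The proof of Theorem~\ref{crossing-karcs} therefore applies almost verbatim on $S_{g,n}$: include each arc of $\mathcal A$ independently with an optimized probability $p$, observe that the surviving subsystem remains a $k$-system subject to Przytycki's bound, and optimize $p$ to obtain $\Cr(\mathcal A)\geq \frac{1}{10^{7} k}\frac{m^{2+1/k}}{n^{1+1/k}}$. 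The mildly worsened constant (compared with $\frac{1}{10^{6} k}$ in the sphere case) absorbs the genus correction.

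\textbf{Regime 2.} When $m$ exceeds the threshold, we run a two-parameter probabilistic deletion driven by the Euler bound alone. Keep each puncture independently with probability $q$ and each arc independently with probability $p$. The surviving system lies on $S_{g,qn}$, has expected size $p q^{2} m$ and expected crossing count $p^{2}q^{4}\Cr(\mathcal A)$. After deleting one arc per crossing and applying the Euler bound in expectation,
\[
\Cr(\mathcal A) \geq \frac{m}{p q^{2}} - \frac{6g}{p^{2}q^{4}} - \frac{3n}{p^{2}q^{3}}.
\]
Setting $p q^{2}=\frac{12g}{m}$ balances the first two terms at $\frac{m^{2}}{24 g}$, and further choosing $q$ of order $\frac{g}{n}$ (feasible since $n > 2^{17} g$) suppresses the residual $\frac{3n}{p^{2}q^{3}}$ to at most half the main term. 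The feasibility constraint $p\leq 1$ becomes $m\gtrsim \frac{n^{2}}{g}$, which is comfortably implied by the regime~2 threshold. This yields $\Cr(\mathcal A)\geq \frac{m^{2}}{2^{12}g}$ (with room to spare in the constant).

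\textbf{Main obstacle.} The most delicate part is pinning down the constants so that the two regimes meet exactly at the stated threshold and produce the explicit prefactors. Specifically, one must (a)~verify that the generalization of Przytycki's bound to $S_{g,n}$ admits a decomposition in which the ``sphere part'' $n^{k+1}$ dominates below the threshold, thereby permitting Theorem~\ref{crossing-karcs}'s argument to be reused with a controlled loss in the constant, and (b)~show that the two-parameter deletion in Regime~2 fully eliminates the $n$-dependence from the residual error term. The hypothesis $n>2^{17}g$ is essentially the minimal separation that allows this suppression while keeping both $q\leq 1$ and $p\leq 1$; relaxing it would reintroduce mixed $n$--$g$ dependence in the final bound.
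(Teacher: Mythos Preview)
Your overall strategy is not what the paper does, and as written it has real gaps in both regimes.

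\textbf{On Regime 1.} You describe the proof of Theorem~\ref{crossing-karcs} as ``include each arc independently with probability $p$ and invoke Przytycki's bound''. That is not how Theorem~\ref{crossing-karcs} is proved (it uses the branching bisection width, Theorem~\ref{bisection}, in a divide-and-conquer), and more importantly the step you describe does not produce a lower bound on $\Cr(\mathcal A)$ at all. Sampling arcs leaves the underlying surface $S_{g,n}$ unchanged, so Przytycki's inequality $|\mathcal A'|\le (k+2)!|\chi|^{k+1}$ is the same before and after sampling; it constrains $m$, not $\Cr(\mathcal A)$. There is no base inequality of the form $\Cr(\mathcal A)\ge m - C_k n^{k+1}$ to amplify, so ``optimize $p$'' has nothing to optimize. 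If instead you intend to sample \emph{punctures} to shrink $|\chi|$, you hit the problem described next.

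\textbf{On Regime 2.} The two-parameter deletion (punctures with probability $q$, arcs with probability $p$) runs into the fact that arc systems are multi-graphs, not simple graphs. When you fill in deleted punctures, two surviving arcs that were non-homotopic on $S_{g,n}$ can become homotopic on $S_{g,qn}$ (e.g.\ a pair of parallel arcs bounding a bigon whose lone interior puncture was deleted). The surviving family is then no longer a system of arcs, and neither the $0$-system bound $N_0(S_{g,n'})\le 6g+3n'-6$ nor the base inequality $\Cr\ge m'-6g-3n'$ is available. In the classical crossing lemma this step is harmless because a random induced subgraph of a simple graph stays simple; here that is exactly what fails.

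\textbf{What the paper actually does.} The paper does not attempt a direct probabilistic argument on $S_{g,n}$. It first bounds degrees by vertex-splitting, then applies Lemma~\ref{planarizing1} (built on the Djidjev--Venkatesan planarization theorem) to find a cut system $\alpha$ so that removing the $e=|\mathcal A(\alpha)|$ arcs crossing $\alpha$ leaves a $k$-system on a punctured sphere. The dichotomy is on $e$: if $e>m/2$, the planarizing bound itself forces $\Cr(\mathcal A)\gtrsim e^2/g\gtrsim m^2/g$; if $e\le m/2$, then $\mathcal A\setminus\mathcal A(\alpha)$ has at least $m/2$ arcs on $S_{0,n'}$ and Theorem~\ref{crossing-karcs} applies directly. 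The threshold in the statement is just where these two lower bounds cross. No puncture deletion is used, so the non-homotopy issue never arises.
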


We can observe the same dichotomy in terms of the number of edges with respect to the genus (see Theorem 4.1 in \cite{shahrokhi}). 

The proof of Theorem \ref{genus-arcs} combines Theorem \ref{crossing-karcs} with the following planarizing lemma of independent interest. Our proof of this lemma is similar to the proof of Theorem 6 in \cite{PaToSp}, but, even in the case of graphs, we improve the dependence in $g$. Define the set $\mathcal{A}(\alpha)$ to be the subset of arcs of $\mathcal A$ that intersect a family of disjoint simple closed curves $\alpha$, and denote by $d(v)$ the number of arcs incident to the puncture $v$.

\begin{lemma}\label{planarizing1} 
For any $k$-system of arcs $\mathcal{A}$ in $S_{g,n}$, there exists a family of disjoint simple closed curves $\alpha=\{\alpha_1,\alpha_2, \ldots \alpha_g\}$ such that, cutting along $\alpha$ yields a genus $0$ connected surface with $2g$ boundaries and  \[|\mathcal A(\alpha)| \leq 20\sqrt{ g(\Cr(\mathcal A)+\sum_v d(v)^2+n+2g)}.\] 

\end{lemma}
Notice that after pasting punctured disks along the boundaries of $S_{g,n}\setminus \alpha$, we obtain a surface homeomorphic to $S_{0,n+2g}$ in which $\mathcal{A}\setminus \mathcal A(\alpha)$ is a $k$-system. \\

We end the introduction with a question. Przytycki's work was motivated by problems about systems of closed curves in surfaces. Using Przytycki's theorem, Greene  \cite{greene1} showed that a $1$-system of simple closed curves in a closed surface of genus $g$ has at most $O(g^2 \log g)$ curves (see \cite{greene2} for $k>1$). The logarithmic factor is still mysterious, and conjecturally it is not necessary. 
 
\begin{question} Do there exist positive constants $c,c'>0$ such that any $1$-system of $m$ simple closed curves $\Gamma$ on a surface of genus $g$ with $m\geq c g$ satisfies $\Cr(\Gamma)\geq c' \frac{m^3}{g^2}$?
\end{question}

Notice that $\Cr(\Gamma)\leq {m \choose 2}$, so a positive answer to the above question would provide a quadratic upper bound on $1$-systems of curves in terms of the genus, showing the non-necessity of the logarithmic factor. We will revisit a variant of this question in a subsequent paper.

\section{Setup and preliminaries}

We consider a closed orientable finite type surface $S$ of genus $g\geq 0$ and with $n$ punctures (which we think of as marked points) and such that $\chi(S)=2-2g -n <0$. A simple arc is map of the unit interval into $X$ such that the map is an embedding from the open interval and such that the endpoints of the interval are mapped to the marked points. We consider arcs up to homotopy where homotopies are required to fix the marked points pointwise.

Intersection between homotopically distinct arcs $a,b$ is the integer $i(a,b)$ equal to the minimum number of transversal intersections between representatives of the homotopy classes of $a$ and $b$. If we restrict to simple representatives, this corresponds to 
$$
i(a,b) = \min \{ | a' \cap b'|  \mid a \sim a', b \sim b' \}
$$
where $\sim$ means homotopic and $| a' \cap b'|$ is the cardinality of the set outside the set of punctures. In particular, two arcs can share an endpoint (or both) and still have $0$ intersection. A family $\A$ of simple arcs on $S$ is called a {\bf $k$-system of arcs} if for every pair of arcs $a, a' \in \A$, $i(a,a') \leq k$.

\begin{remark}
Notice that there is no loss of generality by putting the arcs in minimal position in this definition and the hypothesis on homotopy classes of arcs is equivalent to the statement about arcs used in the introduction. 
\end{remark}

Systems of simple arcs on a surface are equivalent to drawings of multi-graphs (graphs in which we allow loops and multiple edges between vertices). The vertices of the graph are exactly the marked points of the surface. For instance, in our context, a simple graph drawn on the sphere corresponds to a system of arcs $\mathcal A$ on a sphere with $n$ punctures, for which any two punctures co-bound at most one arc, and there are no loops. More generally, a planar drawing is equivalent to a simple arc system on a sphere, but with a puncture without any arcs leaving from it corresponding to the point at infinity of the plane.

We will switch freely between the two terminologies. The set of punctures/vertices will often be denoted by $V$. The crossing number of a $k$-system of arcs $\mathcal A$, or of the corresponding multi-graph drawing $G$ (which we denote both by $\Cr(\mathcal A)$ and by $\Cr(G)$) is the number of intersection points between the interior of the arcs, again under the assumption that the interior of three or more arcs don't cross at a single point and that the intersection of two arcs is transversal.  In the following theorem (the first part is from \cite{PaTo}, the second part is from \cite{Suk}), we summarize results on crossing numbers under intersection conditions of multi-graphs that the case $k=1$ of Theorem \ref{crossing-karcs} improves and generalizes.


\begin{theorem}[Pach-Toth, Fox-Pach-Suk] \label{pach} Let $\mathcal A$ be a $1$-system of $m$ arcs without loops on a $(n+1)$-punctured sphere and such that any two arcs that share two endpoints bound a simple closed curve. Then the following hold:
\begin{itemize}

\item if any two arcs that share one endpoint have no other intersection, then $\Cr(\mathcal A )\geq  \frac{1}{10^{7}} \frac{m^3}{n^2}-4n$,

\item $\Cr(\mathcal A )\geq\frac{1}{10^{25}} \frac{m^3}{n^2 \log \frac {m} {n}}-8n$.

\end{itemize}
\end{theorem}

Theorem \ref{crossing-karcs} also generalizes one of the main results of \cite{wood}. There they assume that they are dealing with $k$-systems of arcs with the extra hypothesis that each of the arcs is the graph of a function from an interval of the real line into the reals. This condition is called \emph{$x$-monotone } in the combinatorics literature. They derive a crossing lemma similar to Theorem \ref{crossing-karcs} for $x$-monotone $k$-systems of arcs. 

The key new ingredient we use is the following theorem by Przytycki \cite{prz}.
 
\begin{theorem}[Przytycki]\label{arcs} If $\mathcal A$ is a $k$-system of arcs in a surface of Euler characteristic $\chi$, then
\begin{itemize} 
\item $|\mathcal A| \leq 2|\chi||\chi+1|$ for $k=1$, and this is best possible.
\item $|\mathcal A| \leq (k+2)!|\chi|^{k+1}$ for $k>1$, and the exponent $(k+1)$ is best possible.
\end{itemize}
\end{theorem}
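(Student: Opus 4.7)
My plan is to prove Przytycki's bound by a double induction, the outer one on $k$ and the inner one on the complexity $|\chi|$ of the surface. For the base case $k=0$, pairwise disjoint non-homotopic arcs can be completed to an ideal triangulation; Euler's formula $V-E+F=\chi$ combined with $3F=2E$ then yields the linear bound $|\mathcal A| \leq -3\chi$. This handles $k=0$ and simultaneously provides a triangulation tool I would reuse throughout the inductive steps.

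For $k=1$, I would fix an arc $a \in \mathcal A$ and partition the remaining arcs by their intersection pattern with $a$. The arcs disjoint from $a$ form a $1$-system on the cut surface $S \setminus a$, whose Euler characteristic is $\chi+1$ in absolute value, and so contribute at most $2|\chi+1||\chi+2|$ by induction on $|\chi|$. The heart of the matter is bounding the arcs crossing $a$ exactly once: at the unique intersection $p = a \cap b$, one can perform a local surgery rerouting $b$ along a sub-arc of $a$, producing a simple arc from a vertex of $a$ to a vertex of $b$. By canonically selecting one of the four possible surgeries (say, always going left at $p$), the family of rerouted arcs should become a collection of pairwise non-homotopic simple arcs whose size is linear in $|\chi|$ by the base case. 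Multiplying the two estimates gives the quadratic bound $2|\chi||\chi+1|$.

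The inductive step $k \to k+1$ follows the same blueprint. Fixing $a \in \mathcal A$, the arcs with $j=0$ intersections with $a$ form a $(k+1)$-system on a simpler surface and are controlled by the induction on $|\chi|$. For arcs $b$ with $j \geq 1$ intersections, surgery at one of these intersection points produces a rerouted arc whose intersection count with $a$ drops by one, and whose pairwise intersections with the other rerouted arcs remain bounded by $k$. Thus the rerouted family is essentially a $k$-system on $S$, of size at most $(k+2)!|\chi|^{k+1}$ by the outer induction. Since each original arc gives rise to at most $j+1 \leq k+2$ possible surgeries, a bounded-multiplicity argument recovers the claimed factorial bound $(k+3)!|\chi|^{k+2}$, consistent up to the re-indexing of the theorem statement.

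The main obstacle is making the surgery analysis genuinely rigorous. Distinct arcs $b \neq b'$ may produce identical or homotopic rerouted arcs, so one must verify that the surgery map is bounded-to-one and that no new unexpected intersections are created among the rerouted family. A careful case analysis of the cyclic orderings of arc-ends around the common endpoints of $a$ and $b$, combined with a pigeonhole-type control on the homotopy classes realized by surgered arcs, seems essential and is the technical core of such arguments. Separately, showing that the exponent $k+1$ is best possible is a distinct task, requiring an explicit construction; I would look for a recursive family on surfaces built from pairs of pants or similar atomic pieces that produces $k$-systems of size $\Theta(|\chi|^{k+1})$.
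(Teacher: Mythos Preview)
The paper does not prove this theorem; it quotes it from Przytycki \cite{prz} and uses it as a black box. There is therefore no in-paper argument to compare your proposal against.

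On the substance of your plan, the central unjustified step is the claim that surgery strictly lowers the pairwise intersection number of the whole family: that the rerouted arcs form a $0$-system in the $k=1$ case, and a $k$-system in the $(k{+}1)\to k$ step. Surgery at $p=a\cap b$ replaces one half of $b$ by a sub-arc of $a$; this removes one intersection of $b$ with $a$, but it does nothing to intersections among the retained $b$-halves themselves. If $b$ and $b'$ each cross $a$ once and also cross each other once at a point $q$ lying on both retained halves, the surgered arcs still meet at $q$, so the surgered family need not be a $0$-system and your appeal to the $k=0$ base case fails. The same obstruction blocks the general inductive step: routing along pieces of $a$ does not decrease $i(b,b')$. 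Przytycki's actual argument does not attempt to reduce the intersection number of the whole family in this way; the mechanism is an injective assignment of combinatorial data (derived from a fixed ideal triangulation) to arcs of the system, with the bound then read off from a count of the available data. Your outline would need a genuinely new idea to control the residual crossings among surgered arcs before the induction could close. As a minor point, ``multiplying the two estimates'' should presumably be ``adding'', and even then the inequality $2(|\chi|-1)(|\chi|-2)+O(|\chi|)\le 2|\chi|(|\chi|-1)$ only holds for $|\chi|$ sufficiently large, so the small cases would require separate treatment.
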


Przytycki constructed examples of $k$-systems with more than $(\frac{|\chi|}{k+1})^{k+1}$ arcs. This was improved in \cite{wood} to $k$-systems of $x$-monotone curves in an $n$ punctured sphere with $\sum_{i=1}^{k+1} {n \choose i}$ arcs. Actually Przytycki construction is also in a $n$-punctured sphere and can be made with $x$-monotone curves. 

 As shown in Lemma 1 from \cite{Kaufmann}, the fact that the exponent of the second item is the best possible implies that, up to the value of $c_k$, Theorem \ref{crossing-karcs} is also the best possible for every $k\geq 1$. 

Combining Theorem \ref{arcs} with Turan's theorem, one obtains some simple corollaries about numbers of pairs that cross at most $k$ times. 

\begin{corollary}\label{turan-arcs} If $\mathcal A$ is a family of $m$ pairwise non-homotopic simple arcs on a surface of Euler characteristic $\chi$, then
\begin{itemize}
\item At least $\frac{m^2}{2}\frac{1}{3 |\chi|}-\frac{m}{2}$ pairs of arcs intersect at least once.
\item At least $\frac{m^2}{2} \frac{1}{2(|\chi|+1)|\chi|}-\frac{m}{2}$  intersect at least twice. 
\item At least $\frac{m^2}{2} \frac{1}{(k+2)! |\chi|^{k}}-\frac{m}{2}$ pairs of arcs intersect at least $k$ times.
\end{itemize}
\end{corollary}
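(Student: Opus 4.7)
The plan is to combine Przytycki's bounds (Theorem \ref{arcs}) with Tur\'an's theorem via an auxiliary-graph construction. For each of the three items, I would introduce a graph $H_j$ on vertex set $\mathcal A$ in which two arcs are joined by an edge if and only if they intersect at most $j$ times; the three cases correspond to $j=0$, $j=1$, and $j=k-1$ respectively. A clique in $H_j$ is by construction a $j$-system of arcs, so its size is bounded above by a constant $r_j$ depending only on $j$ and $|\chi|$.

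The values of $r_j$ come from two sources. For $j=0$ I would invoke the classical fact that any pairwise disjoint, pairwise non-homotopic family of arcs extends to an ideal triangulation, giving $r_0 = 3|\chi|$. For $j=1$ the first bullet of Theorem \ref{arcs} supplies $r_1 = 2|\chi|(|\chi|+1)$. For $j=k-1$ with $k\geq 3$, the second bullet of Theorem \ref{arcs} applied with index $k-1$ gives $r_{k-1} \leq (k+1)!\,|\chi|^{k}$, which can be loosened to $(k+2)!\,|\chi|^{k}$ to match the form in the statement. Once $r_j$ is pinned down, Tur\'an's theorem yields $|E(H_j)| \leq (1 - 1/r_j)\,m^2/2$, and the number of non-edges of $H_j$ --- equivalently the number of unordered pairs of arcs intersecting at least $j+1$ times --- is at least
\[
\binom{m}{2} - \left(1 - \frac{1}{r_j}\right) \frac{m^2}{2} \;=\; \frac{m^2}{2 r_j} - \frac{m}{2}.
\]
Substituting the three values of $r_j$ produces the three displayed bounds verbatim.

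The approach carries no real conceptual difficulty: it is essentially a black-box combination of Przytycki's theorem with Tur\'an. The only care required is aligning the off-by-one between the threshold \emph{intersects at least $k$ times} and the index of Theorem \ref{arcs} that must be invoked (one feeds in $(k-1)$-systems), and observing that the $j=0$ case falls outside the statement of Theorem \ref{arcs} and requires the elementary ideal-triangulation bound instead.
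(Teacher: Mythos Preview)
Your proposal is correct and follows essentially the same route as the paper: build the auxiliary graph on $\mathcal A$ where adjacency means ``intersect at most $j$ times,'' bound clique size via Przytycki, and apply Tur\'an. If anything you are more careful than the paper's own write-up: you correctly separate out the $j=0$ case (which needs the ideal-triangulation bound $3|\chi|$ rather than Theorem~\ref{arcs}), and you track the off-by-one between the threshold $k$ in the statement and the index $k-1$ fed into Przytycki, observing that $(k+1)!\,|\chi|^k$ already suffices and is then relaxed to $(k+2)!\,|\chi|^k$.
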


\begin{proof}
Consider a graph $G_{k-1}(\mathcal A)$ that contains a vertex for each arc and such that two vertices are adjacent if the corresponding curves intersect in at most $(k-1)$ points. 
If this graph has more than $\frac{m^2}{2}(1-\frac{1}{(k+2)! |\chi|^k+1})$ edges, then by Turan's theorem, it contains a clique of size $\frac{1}{(k+2)! |\chi|^k}$, which contradicts Theorem \ref{arcs}. Passing to the complement, we obtain that the number of pairs of arcs intersecting at least $(k-1)$ times is greater than ${m \choose 2}-\frac{m^2}{2}(\frac{1}{(k+2)! |\chi|^k+1})$.
\end{proof}

Since $|\chi|=2g+n-2$, the implication of this corollary to the number of crossings is weaker than the previous theorems for most four-tuples of positive integers $m, n, k, g$.

\section{The planar case}\label{genus_0}

This section is essentially taken from \cite{Suk} and the idea goes back to \cite{PaToSp} and \cite{PaTo}. The main tool is the notion of \emph{branching bisection width} from \cite{PaTo} which we now restate in our language of arcs on punctured spheres.  Let $\mathcal A$ be  a $k$-system of arcs on the surface of genus $0$ with $n$ punctures $S_{0,n}$, which we abusing notation we also denote by $S_{0,V}$, where $V$ is the set of punctures $V$ (so $|V|=n$). For a subset $V' \subset V$, we denote by $S_{0,V'}$ the surface $S_{0,V}$ after we fill the punctures in $V\setminus V'$. In graph theoretical terms, we forget about the vertices that are not in $V'$. 

Define $bb(\mathcal A)$ to be the least number of arcs that need to be removed so that we obtain two sets of punctures $V_1,V_2$, each with at least $\frac{n}{5}$ of them, and such that no remaining arc is incident to a puncture in $V_1$ and a puncture in $V_2$. The remaining arcs are partitioned into two $k$-systems $\mathcal A_1$ in $S_{0,V_1}$ and $\mathcal A_2$ in $S_{0,V_2}$. In graph theoretical terms, after erasing a set of edges, we have two disjoint multi-graphs with at least $n/5$ vertices each. We now need the following estimate on the size of $bb(\mathcal A)$ from \cite{PaTo}:

\begin{theorem}\label{bisection} For any $k$-system $\mathcal A$ in $S_{0,V}$ 
\[bb(\mathcal A)\leq 20 \sqrt{(\Cr(\mathcal A) + \sum_{v \in V} d(v)^2+ |V|}) \]
\end{theorem}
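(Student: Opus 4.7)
The plan is to reduce to the classical Lipton--Tarjan planar separator theorem. Start by constructing the \emph{planarization} $G^+$ of the drawing: introduce a new degree-4 vertex at each crossing and view the arc-segments between consecutive crossings/punctures as edges. Then blow up each vertex $u$ of $G^+$ into a planar binary tree $T_u$ with $\deg_{G^+}(u)$ leaves, one per incident arc-segment; the cyclic order of the leaves matches that of the incident segments, which preserves planarity. The resulting graph $\hat G$ is planar, has maximum degree $3$, and has $\sum_u(2\deg_{G^+}(u)-1)=4m-n+7\Cr(\mathcal A)$ vertices, where $m=|\mathcal A|$.

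Assign weight $1$ to one distinguished vertex of each $T_v$ (for $v\in V$) and weight $0$ elsewhere, for a total weight of $n$. The weighted Lipton--Tarjan theorem produces a vertex separator $\hat S\subset V(\hat G)$ with $|\hat S|\leq c_0\sqrt{4m-n+7\Cr(\mathcal A)}$ such that the components of $\hat G\setminus\hat S$ can be grouped into two sides $A,B$ of weight $\leq 2n/3$ each. Because $\hat G$ has maximum degree $3$, the set of edges incident to $\hat S$ has cardinality at most $3|\hat S|$. Among these, only the ``segment'' edges (not internal to any $T_u$) correspond to arcs of $\mathcal A$, and severing any one segment of an arc is equivalent to deleting that arc from the drawing. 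Defining $V_1,V_2\subset V$ to consist of those punctures whose entire tree lies in $A$ or in $B$ (punctures whose tree intersects $\hat S$ are placed in neither), one checks that every arc from $V_1$ to $V_2$ is a path in $\hat G$ whose endpoints lie in different components of $\hat G\setminus\hat S$, hence it uses a segment edge incident to $\hat S$. Thus at most $3|\hat S|$ arcs of $\mathcal A$ need to be deleted to disconnect $V_1$ from $V_2$.

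To bring the bound into the stated form, Cauchy--Schwarz $(\sum_v d(v))^2\leq n\sum_v d(v)^2$ and AM--GM give $4m\leq n+\sum_v d(v)^2$, hence $4m-n+7\Cr(\mathcal A)\leq 7\bigl(\Cr(\mathcal A)+\sum_v d(v)^2+n\bigr)$. Substituting yields $bb(\mathcal A)\leq 3c_0\sqrt 7\cdot\sqrt{\Cr(\mathcal A)+\sum_v d(v)^2+n}$, which (with the classical $c_0=2\sqrt2$) is $6\sqrt{14}<22.5$, within epsilon of the stated constant.

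The main obstacle is verifying the $n/5$-balance of $(V_1,V_2)$. Since at most $|\hat S|$ punctures have a tree meeting $\hat S$, the $2/3$-weighted balance yields $\min(|V_1|,|V_2|)\geq n/3-2|\hat S|$, which exceeds $n/5$ as long as $|\hat S|\leq n/15$. In the complementary regime the separator size is already $\Theta(n)$, which forces $\Cr(\mathcal A)+\sum_v d(v)^2+n\gtrsim n^2$, so the right-hand side dominates any trivial cut; the required balance is then recovered by a second application of Lipton--Tarjan to the larger side (or by the standard iterated-separator trick), at the cost of only a constant multiplicative factor in the total arc count.
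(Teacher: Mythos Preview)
The paper does not give its own proof here; it cites the result from Pach--T\'oth and offers a one-sentence sketch: replace each puncture $v$ by a $d(v)\times d(v)$ grid, add a vertex at each crossing, and apply a planar separator theorem to the resulting simple planar graph. Your argument substitutes a binary tree on $2d(v)-1$ vertices for the grid, and this substitution is where a genuine gap appears.

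The problem is the step from the vertex separator $\hat S$ to a set of at most $3|\hat S|$ deleted arcs. You place $v$ into $V_1$ or $V_2$ only when $T_v$ misses $\hat S$ entirely, consigning the remaining punctures to ``neither''. But the definition of $bb(\mathcal A)$ (and its use in the proof of Theorem~\ref{crossing-karcs}) requires every surviving arc to live on $S_{0,V_1}$ or on $S_{0,V_2}$, so every arc touching a ``neither'' puncture must also be removed, and you have not bounded those. Nor can you repair this by assigning such a $v$ to the side of the root of $T_v$: a single separator vertex placed at the root of $T_v$ cuts a tree edge whose far subtree may carry $\Theta(d(v))$ leaves, and if that subtree lands on the opposite side then all of the corresponding arcs run between $V_1$ and $V_2$ and must be deleted. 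A binary tree has no bandwidth---one edge cut can separate arbitrarily many attachment points from the root---so the inequality ``arcs to delete $\le 3|\hat S|$'' simply fails for this gadget.

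This is exactly the obstacle the $d(v)\times d(v)$ grid is engineered to overcome: its $d(v)^{2}$ vertices make it expensive for the separator to split off many attachment points, and that expense is the true source of the $\sum_v d(v)^2$ term in the bound. Your AM--GM step $4m\le n+\sum_v d(v)^2$ recovers the term cosmetically but not structurally; with the tree gadget the auxiliary planar graph has only $O(m+\Cr(\mathcal A))$ vertices, and the resulting smaller separator does not translate back into a comparable bound on $bb(\mathcal A)$.
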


The idea of the proof is to replace each puncture-vertex $v$ by a grid of side length $d(v)$,  and add a puncture at each crossing between two arcs, obtaining a planar graph to then apply a version \cite{alon} of the famous separator theorem to this planar graph. The construction of this embedded graph of maximum degree $4$ will be used in the proof of lemma \ref{planarizing}. 

The original theorem was stated for the conditions of Theorem \ref{pach}, but the proof easily generalizes to $k$-systems. The only detail that needs to be adapted is at the end of the proof. We refer to  \cite{PaTo} for more details, and just sketch the main idea here. In the last step of the proof, we are given a simple closed curve $\gamma$ on the sphere that separates the sphere into two disks $D_1$ and $D_2$. This curve is transverse to every arc in $\mathcal A$, and does not pass through any puncture. We put $V_i:=D_i\cap V$ and $\mathcal A_i$ for the set of arcs connecting two punctures in $V_i$ (for $i=1,2$).  The problem that arises is that a pair of arcs in $\mathcal A_1$ might be homotopy equivalent in $S_{0,V_1}$ (but not in $S_{0,V}$). The important observation here is that any such two pairs of homotopic arcs in $S_{0,V_1}$  must have distinct endpoints and must intersect at least four times \footnote{In the original theorem it was assumed that any pair of arcs that share two common endpoints bounds a simple curve, but this is not necessary for the statement to be true.}. From this observation, it follows that the number of pairs of curves in $\mathcal A_1$ that are homotopically equivalent in $S_{0,V_1}$ is bounded above by $\sqrt{\Cr(\mathcal A)}$ and similarly for the pairs in $\mathcal A_2$ that are homotopically equivalent in $S_{0,V_2}$. All in all, the arcs that we erase to obtain the branching bisection of $\mathcal A$ are those intersected by $\gamma$, plus one arc in $\mathcal A_1$ for each pair of arcs that became homotopic in $S_{0,V_1}$ (and similarly for $\mathcal A_1$), and we account for these arcs in our estimate of $bb(\mathcal A)$ by increasing the multiplicative constant in the right hand side of the inequality. 

\begin{proof}[Proof of Theorem \ref{crossing-karcs}]

Similar proofs can be found in  \cite{PaToSp,PaTo}.  A general framework that axiomatizes this proof can be found in \cite{Kaufmann}, for concreteness we adapt the presentation of \cite{Suk} to the case of general $k$.\\

\noindent{\it Vertex-splitting.} Define $\Delta=\lceil \frac{2m}{n} \rceil$, and split each vertex that has degree larger than $\Delta$ into vertices of degree $\Delta$ and possibly one vertex of smaller degree. Let $G'$ be the new topological multi-graph with identical crossings and $n'$ vertices, where $n\leq n' \leq 2n$. 

Now we set up an inductive procedure. At each step, $\mathcal F_i$ is a family of subgraphs of $G'$, and we apply Theorem \ref{bisection} to some of them. Set 
\[t=\frac{10^{-4}}{k+2}\frac{m^{1+\frac{1}{k}}}{n^{1+\frac{1}{k}}} \mbox{
and } \mathcal F_0=\{G\}.\] For each $i$,  $\mathcal F_i$ is a vertex disjoint family of induced multi-graphs, each $H \in \mathcal F_i$ is drawn on $S_{0,V(H)}$ so that its edges are a $k$-system of arcs, and each satisfies at least one of the following:

\begin{itemize}
\item $\Cr(H) \geq t \,e(H)$ (many crossings),
\item $v(H) \leq (\frac{4}{5})^i n'$ (the graph is small).
\end{itemize}

For $i=0$, the graph has $n'$ vertices. At step $i$, for a graph $H \in \mathcal F_i$, if $\Cr(H)\geq t \, e(H)$ or has less than $(\frac{4}{5})^{i+1} n'$ vertices, then we move it to $\mathcal F_{i+1}$. Otherwise, the number of vertices of $H$ is between $(\frac{4}{5})^{i} n'$, and $(\frac{4}{5})^{i+1} n'$ and we apply Theorem \ref{bisection} to $H$, erasing at most 
\[20 \sqrt{t e(H) +\Delta e(H) + v(H)}\leq 40  (\sqrt{t e(H)} + \sqrt{v(H)})\] edges to obtain topological multi-graphs $H_1\to S_{0,V(H_1)}, H_2\to S_{0,V(H_2)}$, with $E(H_1,H_2)=\emptyset$ and such that the edge set of each is a $k$-system.

Each $H \in \mathcal F_{i}$ that is not moved to $\mathcal F_{i+1}$ has at least $\left(\frac{4}{5}\right)^{i+1} n' $ vertices, so there are at most $\left(\frac {5}{4}\right)^{i+1}$ such graphs. Since $\sum_{H \in \mathcal F_i} e(H)\leq m$ and $\sum_{H \in \mathcal F_i} v(H)\leq n'$, we have \[\sum_{H \in \mathcal F_i} \sqrt{e(H)} \leq \sqrt{m} \left(\frac{5}{4}\right)^{\frac{i+1}{2}}\mbox{ and }\sum_{H \in \mathcal F_i} \sqrt{v(H)} \leq \sqrt{n'} \left(\frac{5}{4}\right)^{\frac{i+1}{2}}.\] In step $i$, we erased a total of at most \[40 \left(\frac{5}{4}\right)^{\frac{i+1}{2}} (\sqrt{t m}+\sqrt{n'})\]
edges. If we stop at the last $j$ such that $\left(\frac{5}{4}\right)^{j/2}\leq 10^{-3} \sqrt{\frac{m}{t}}$, then at most
\[\sum_{i=0}^j 40 \left(\frac{5}{4}\right)^{\frac{i+1}{2}} [\sqrt{t m}+\sqrt{n'}] \leq 500 \left(\frac{5}{4}\right)^{j/2} \sqrt{tm}\leq \frac{m}{2}\] are erased in the inductive process. Each graph $H \in \cup_{i=1}^j \mathcal F_j$ satisfies $\Cr(H)\geq t\, e(H)$, or has at most $\left(\frac{5}{4}\right)^j n' \leq 10^7 t \frac{n'}{m}$ vertices which, by Theorem \ref{arcs}, translates to at most 
\[10^7(k+2)! [t \frac{n'}{m}]^{k+1}\leq m \left(\frac{(k+2)^2}{10 e^{k+1}}\right) \frac{m^\frac{1}{k}}{{n'}^{1+\frac{1}{k}}}\leq
m \left(\frac{1}{10} \frac{(k+2)^2}{e^{k+1}} \frac{(k+2)^{1+\frac{2}{k}}}{e^{1+\frac{2}{k}}}\right)\leq\]\[ \leq m \left(\frac{1}{10}\frac{(k+2)^{3+\frac{2}{k}}}{e^{k+2+\frac{1}{k}}}\right)\leq \frac{m}{4}\] of them are in the very small graphs. 

The erased edges in the inductive process are at most $\frac{m}{2}$ by Theorem \ref{bisection} and the edges in very small graphs at most $\frac{m}{4}$ by Theorem \ref{arcs}. The remaining $\frac{m}{4}$ edges are in multi-graphs such that $\Cr(H)\geq t\, e(H)$, hence we can conclude that $\Cr(G)\geq \frac{t}{4} m\geq \frac{1}{10^{5} (k+2)}  \frac{m^{2+1/k}}{n^{1+1/k}}.$
\end{proof}

This concludes the proof in the planar case. We now proceed to handling surfaces of genus $g>0$.

\section{The general case}\label{proofs}

We need one extra step to go from genus $g$ to genus $0$. For this we use the following result \cite{djidjev}. 

\begin{theorem}[Djidjev-Venkatesan]\label{planarizing} For any (simple) connected graph $G$ with $m$ edges of degree at most $d$, embedded on an orientable surface of genus $g$, there exists a set of at most $4\sqrt{2dgm}$ edges whose removal makes $G$ planar.
\end{theorem}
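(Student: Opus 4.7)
The plan is to prove the theorem by induction on $g$. The base case $g=0$ is immediate, since the graph is already planar and the bound $4\sqrt{2dgm}$ is zero. For the inductive step, given $G$ embedded on an orientable surface of genus $g\geq 1$, the strategy is to find a short noncontractible cycle $C\subset G$, remove its edges to obtain a graph $G'$ embedded on a surface of genus at most $g-1$, and invoke the inductive hypothesis on $G'$. If one can always find a noncontractible cycle of length at most $L(g)=4\bigl(\sqrt{2dgm}-\sqrt{2d(g-1)m}\bigr)$, the telescoping sum over all inductive steps produces exactly the claimed bound.

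The key lemma is therefore: an embedded graph of maximum degree $d$ on a genus-$g$ surface with $g\geq 1$ and $m$ edges has a noncontractible cycle of length at most $O\bigl(\sqrt{m/(dg)}\bigr)$. The natural approach is through BFS: pick any root $v$ and produce layers $L_0,L_1,\ldots,L_D$ together with the BFS tree $T$. Each non-tree edge yields a fundamental cycle of length at most $2D+1$, and a standard homology argument shows that these fundamental cycles span $H_1$ of the surface with $\mathbb{Z}/2$ coefficients, so at least $2g$ of them are noncontractible. It then suffices to bound the smallest radius $r$ at which the subgraph induced on $B_r(v)$ contains a noncontractible cycle.

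To bound this radius, while $B_r(v)$ carries no noncontractible cycle it is disc-like on the surface, so the planar Euler formula yields $|E(G[B_r(v)])|\leq 3|B_r(v)|$; combining this with the degree bound $|L_{r+1}|\leq d\,|L_r|$ and the fact that the total edge count $m$ is distributed among the layers, a pigeonhole on layer sizes pinpoints a radius $r_0=O\bigl(\sqrt{m/(dg)}\bigr)$ at which the next layer must close up a noncontractible loop of length at most $2r_0+1$.

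The main obstacle is the quantitative bookkeeping: getting the sharp constant $4\sqrt{2}$ requires a careful choice of BFS root (possibly averaging over several roots) and of the threshold radius, so that the per-iteration cycle lengths telescope correctly as the genus decreases from $g$ down to $0$. If this telescoping turns out to be awkward, I would instead produce all $2g$ noncontractible cycles at once via a cut graph construction — a spanning tree of $G$ together with a spanning tree of the cotree in the dual $G^*$, yielding a system of $2g$ loops whose complement is a disc — and bound their total length in one shot rather than inductively.
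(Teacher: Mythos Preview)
The paper does not prove this theorem at all: it is quoted as a black box from Djidjev--Venkatesan \cite{djidjev} and then used as an ingredient in the proof of Lemma~\ref{planarizing1}. So there is no ``paper's own proof'' to compare against. That said, your overall architecture---iterate ``find a short non\-separating cycle, delete its edges, drop the genus by one, telescope''---is indeed the strategy of the original Djidjev--Venkatesan argument, so the plan is sound in spirit.

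Where the proposal is genuinely incomplete is the key lemma. Your telescoping requires, at genus $i$, a non\-contractible cycle of length at most
\[
L(i)=4\bigl(\sqrt{2dim}-\sqrt{2d(i-1)m}\bigr)\approx 2\sqrt{2dm/i},
\]
i.e.\ a bound of order $\sqrt{dm/g}$, whereas you state the key lemma as $O(\sqrt{m/(dg)})$; the two differ by a factor of $d$, and the latter is the stronger (hence harder) claim. More importantly, the BFS sketch cannot produce either bound as written: the only fact you invoke about the ball $B_r(v)$ is that it is planar while it contains no non\-contractible cycle, and planarity of $B_r$ is a genus\-independent statement. No ``pigeonhole on layer sizes'' using only $|E(G[B_r])|\le 3|B_r|$ and $|L_{r+1}|\le d\,|L_r|$ will manufacture a factor of $1/\sqrt{g}$ in the radius. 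In the actual Djidjev--Venkatesan proof the $g$\-dependence arises from a more delicate level\-selection argument (choosing several BFS levels simultaneously and charging the cut edges against $m$), not from a single short fundamental cycle; your alternative cut\-graph idea via tree--cotree is closer to this, but you would still need to explain how to choose the primal and dual spanning trees so that the $2g$ resulting loops have \emph{total} length $O(\sqrt{dgm})$.

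A smaller point you should also address: deleting the edges of a non\-contractible cycle $C$ need not lower the embedding genus unless $C$ is non\-separating (homologically nontrivial); your homology remark does guarantee such a $C$ exists among the fundamental cycles, but you should say so, and the re\-embedding of $G-E(C)$ on $S_{g-1}$ after cutting and capping still requires a short argument about identifying the duplicated vertices along the two boundary circles.
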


Let $\mathcal A$ be a systems of arcs, and $\alpha$ a multi-curve (a disjoint family of simple curves), we denote by $A(\alpha)$ the subsystem of arcs that intersects $\alpha$. Notice that in the previous theorem, if we dualize, and consider the dual of the edges that we remove, we obtain a multi-curve.  We now prove Lemma \ref{planarizing1}, stated in the introduction, that allows us to planarize drawings.

\begin{proof}[Lemma \ref{planarizing1}]
As before, we assume that $|V|=n$,  we think of $\mathcal A$ as a drawing of a multigraph $G$ and for each of vertex in $G$ put $\bar{d}(v)$ to be the degree of $v$ in $G$ if $v$ is not isolated, and $1$ if $v$ is isolated. 
Blow up each vertex $v_i$ of $G$ to a square grid of side length $\bar{d}(v_i)$, we denote the vertices of this grid by $V_i$, and we denote the corresponding blown-up graph by $G'$. We will construct the edges of $G'$ so that no two edges join the same pair of vertices (so that $G'$ is a graph and not a multi-graph). The blown-up graph $G'$ has $\sum_v \bar{d}(v)^2$ vertices.  The edges of $G'$ are of two types, firstly for each edge in $G$, there is an edge in $G'$, secondly,  for each $i$, $G'$ contains the edges of the aforementioned grid within the vertex set $V_i$. By making the drawings of the grids small enough and a judicious choice of arcs, $G'$ can be drawn so that $\Cr(G)=\Cr(G')$. More precisely, if $v_i$ is not isolated, we choose a side on each square-grid, and draw the $\bar{d}(v_i)$ edges incident $V_i$ to be incident to this side of the grid $V_i$. Each of the edges of $G'$ that corresponds to an edge of $G$ is incident to a different vertex of $V_i$. Notice that the maximum degree of $G'$ is $4$. It is easy to see that by constructing the embedded grid on $V_i$ in a small neighborhood of $v_i$, one can route the edges of $G'$ so that there exist a continuous map  $\phi_1 \colon S_{g,\sum_v \bar{d}(v)^2} \to S_{g,n}$, which restricts to a bijection between the crossings of $G$ and those of $G'$, the inverse image of each edge in $G$ is an edge in $G'$ which is not contained in a square grid, and for each $i$, $\phi_1$ maps $V_i$ to $v_i$. 

If the edges of $G$ are a $k$-system then the edges of $G'$ are a $k$-system. Now we are going to introduce a vertex at each crossing of the drawn graph $G'$ to obtain a new graph $H$. This is a graph (not a multi-graph) and it is embedded on a surface of genus $g$. It has $N=\Cr(G)+\sum_v  \bar{d}(v)^2$ vertices and its maximal degree is $4$ (we assume no three of the original arcs go through the same point). In other words the edges of $H$ form a $0$-system of arcs in the surface $S_{g,N}$, and there is a map $\phi_2 \colon S_{g,N} \to S_{g,\sum_v \bar{d}(v)^2}$, that sends the union of the edges of $H$ to the union of edges of $G'$.

Notice that, the grids in $G'$ remain being grids in $H$ because they were already embedded. By Euler's formula $H$ has at most $3N+2g$ edges which we denote by $E$. By Theorem \ref{planarizing}, there exists a multi-curve $\alpha$ intersecting a set $E(\alpha)$ of at most $20\sqrt{g(\Cr(G)+\sum_v \bar{d}(v)^2+2g})$ edges of $H$, so that after erasing these edges we obtain a planar graph. We are now going to modify $\alpha$ to avoid intersecting the grid edges. For each $i$, let $s_i$ be a simply connected open disk of the surface that contains the grid $V_i$. The restriction of $\alpha$ to $s_i$ is a union of arcs. Indeed any closed curve completely contained in $s_i$ is contractible, and we might erase it. For each subarc of $\alpha$ that enters $s_i$, consider another arc that starts and ends near the same points of $\partial s_i$ and stays on the boundary of $s_i$.  It is not hard to see that this surgery of $\alpha$ can be done without increasing the number of edges of $H$ crossed by by $\alpha$ (see a similar analysis in \cite{PaTo}). 
After modifying $\alpha$ repeatedly, $\alpha$ avoids all the grid edges. In other words, each of edge of $E(\alpha)$ corresponds to a subarc of an edge in $G$ through $\phi_1 \circ \phi_2$, hence $|\mathcal A(\alpha)|=|E(\alpha)|$. By Theorem \ref{planarizing} the surface with boundary $S_{g,n} \setminus \alpha$ has genus $0$. If we replace the boundary components with disks with one puncture each, we trace $\mathcal A \setminus \mathcal A(\alpha)$ we obtain a $k$-system of arcs on a sphere with $n+2g$ punctures in total. Indeed, if two arcs $a,a' \in \mathcal A \setminus \mathcal A(\alpha)$ are homotopic in $S_{0,n+2g}$ then their $\phi_1\circ \phi_2$-images are homotopic in $S_{g,n}$.
\end{proof}

With this in hand, we can pass to the proof of the general case.

\begin{proof}[Theorem \ref{genus-arcs}]
We can assume $g>0$ as otherwise the result is Theorem \ref{crossing-karcs}. We begin, like in the proof of Theorem \ref{crossing-karcs}, by splitting vertices. Let $G'$ be a new topological multi-graph with identical number of edges and crossings, with $n'$ vertices with $n\leq n' \leq 2n$ each of which has degree at most $\Delta:=\lceil \frac{2m}{n} \rceil$. We abuse notation and still call $\mathcal A$ the system of arcs.

By Lemma \ref{planarizing1}, we obtain a subsystem of arcs $\mathcal A (\alpha)$ such that, if we put $e:=|\mathcal A (\alpha)|$, then

\[e\leq 20\sqrt{g (\Cr(\mathcal A)+\sum_v d(v)^2+n'+2g-2)}\leq 20\sqrt{g (\Cr(\mathcal A)+2n' (\Delta^2+1)+2g)},\]
which implies
\[Cr(\mathcal A)\geq \frac{e^2}{2^{9} g}-4\frac{m^2}{n}-2n-2g.\]

The system of arcs $\mathcal A \setminus \mathcal A(\alpha)$ can be thought of as a $k$-system of arcs in a $2n+2g$ punctured sphere. Provided that $m-e>8n$, we can apply Theorem \ref{crossing-karcs} which, since we assumed  $ n > 2^{17} g$, implies  \[\Cr(\mathcal A \setminus \mathcal A(\alpha)) \geq \frac{1}{10^6k}\frac{(m-e)^{2+1/k}}{(2n+2g)^{1+1/k}} \geq \frac{1}{10^8k}\frac{(m-e)^{2+1/k}}{(n)^{1+1/k}}  .\]

If $e>m/2$ we rely on the first inequality and if $e\leq m/2$ we rely on the second one, and since $m > 16 n$ and $ n > 2^{17} g$, we have
\[\Cr(\mathcal A)\geq \min \left( \frac{m^2}{2^{12} g},\frac{1}{10^8 k}\frac{m^{2+1/k}}{n^{1+1/k}}\right ).\]

Now if $m\geq \frac{5^{7k}}{2^{5k}} \frac{k^k}{g^k} n^{k+1}$, the first term is the smallest one, and otherwise we can use the second term as a lower bound. 
\end{proof}

\end{document}